\newtheorem{theorem}{Theorem}[section]
\newtheorem{lemma}[theorem]{Lemma}
\newtheorem{corollary}[theorem]{Corollary}
\newcommand{\tr}{\langle \tau \rangle}
\newcommand{\A}{\mathcal{A}}
\newcommand{\wt}[1]{\widetilde{#1}}
\newcommand{\G}{\wt{G}}
\begin{document}
\title[Maps preserving zeros of a polynomial]{Maps preserving zeros of a polynomial}

\author{J. Alaminos}
\author{M. Bre\v sar}
\author{\v S. \v Spenko}
\author{A.\,R. Villena}
\address{J. Alaminos  and A.\,R. Villena, Departamento de An\' alisis
Matem\' atico, Facultad de Ciencias, Universidad de Granada,
 Granada, Spain} \email{alaminos@ugr.es,
avillena@ugr.es}
\address{M. Bre\v sar,  Faculty of Mathematics and Physics,  University of Ljubljana,
 and Faculty of Natural Sciences and Mathematics, University
of Maribor, Slovenia} \email{matej.bresar@fmf.uni-lj.si}
\address{\v S. \v Spenko,  Institute of  Mathematics, Physics, and Mechanics,  Ljubljana, Slovenia} \email{spela.spenko@imfm.si}

\keywords{Multilinear polynomial, free algebra, linear preserver problem,  matrix algebra, prime algebra, $C^*$-algebra, algebraic group, polynomial identity, functional identity}
\thanks{
2010 {\em Math. Subj. Class.} 15A86, 16R10, 16R60, 20G15, 47B48.}
\thanks{The first and the fourth named authors were supported
by MICINN  Grant MTM2009-07498
and Junta de Andaluc\'ia Grants FQM-185 and
P09-FQM-4911.
The second named author  was supported by ARRS Grant P1-0288.}

\begin{abstract}
Let $\A$ be an algebra and let $f(x_1,\ldots,x_d)$ be a multilinear polynomial in noncommuting indeterminates $x_i$.
We consider the problem of describing linear maps $\phi:\A\to \A$ that preserve zeros of $f$. Under certain technical restrictions we solve the problem for general polynomials $f$ in the case where $\A=M_n(F)$. We also consider quite  general algebras $\A$, but only for specific polynomials $f$.
\end{abstract}
\maketitle

\section{Introduction}

Let $F$ be a field, let $F\langle X\rangle$ be the free algebra generated by the set  $X=\{x_1,x_2,\ldots\}$ of countably
many noncommuting indeterminates, and let  $f=f(x_1,\ldots,x_d)\in F\langle X\rangle$ be a nonzero polynomial.
We say that a map $\phi$ from an $F$-algebra $\A$ into itself  {\em preserves zeros of $f$} if   for all $a_1,\ldots,a_d\in \A$,
 $$f(a_1,\dots,a_d)=0\,\,\Longrightarrow \,\, f(\phi(a_1),\dots,\phi(a_d))=0.$$
 The list of all maps on $\A$ that preserve zeros of $f$ must  certainly contain scalar multiples  of automorphisms,
 for some polynomials   it must also contain
scalar multiples of antiautomorphisms (say, for $f=x_1x_2+x_2x_1$), and for some of them    even   all maps of the form
\begin{equation}\label{sf}
 \phi(x) = \alpha \theta(x) + \mu(x),
 \end{equation}
 where $\alpha\in F$, $\theta:\A\to \A$ is either an automorphism or an antiautomorphism, and
 $\mu$ is a linear map from $\A$ into its center (say, for $f=x_1x_2-x_2x_1$).
 Our goal is to show that under certain   restrictions - in particular, we will  confine ourselves to linear maps
 $\phi$ and multilinear polynomials $f$ - the standard example \eqref{sf} is also the only possible example of a map preserving zeros of $f$. We will not bother with the
 question for which polynomials \eqref{sf} can be  simplified.

 For certain simple  polynomials, especially for $f=x_1x_2$ and $f=x_1x_2-x_2x_1$, our problem has a long and rich history; see, for example, \cite{ABEV0} and \cite{FI} for historic comments and references.  So far not much is known  for general polynomials. For them the problem was explicitly posed by Chebotar et al. \cite{CFL} for the matrix algebra   $\A=M_n(F)$, and some partial solutions were obtained in two recent papers:  \cite{GK}  considers, in particular, the case where the sum of coefficients of $f$ is a nonzero scalar (without assuming the linearity of $\phi$), and
  \cite{DD} handles Lie polynomials of degree at most $ 4$. Let us also mention a related, yet considerably simpler, problem of describing linear maps that preserve all values of $f$, i.e.,
   $\phi(f(a_1,\dots,a_d)) =f(\phi(a_1),\dots,\phi(a_d))$ for all $a_i\in\A$.
  This problem can be solved at a high level of generality by using functional identities, although for finite dimensional algebras (including $M_n(F)$) the obtained results are not optimal; see \cite{BF} and also \cite[Section 6.5]{FI}.

One of the most fascinating approaches to linear preserver problems on matrix algebras  was developed by Platonov and \DJ okovi\' c in \cite{PD}. It is based on linear algebraic groups. In Section 2 we will see that this approach is applicable to our problem. In  the matrix algebra $\A=M_n(F)$ we will be able to consider general multilinear polynomials $f$; however, we will be forced to impose  several technical restrictions some of which might be superfluous. The general problem from \cite{CFL} is therefore not yet completely solved.

In Section 3 we will prove three results giving solutions to our problem for some special polynomials, but in the context of rather general classes of prime algebras and/or  $C^*$-algebras. More precisely, we will show that for these polynomials the problem can be reduced to some still  nontrivial,  but already solved problems. For polynomials that are not covered in our considerations, or at least cannot be handled by similar methods,  the problem seems to be very intriguing.

% ...  In the proof of this theorem we will take the liberty of using numerous arguments from \cite{PD}, especially from the proof of Theorem 1 from this paper which deals with the polynomial $x_1x_2-x_2x_1$. However, dealing with a general polynomial does make the problem more demanding, and some methods that are not included in \cite{PD} also have to be used.

\section{The matrix algebra case}

The main goal of this section is to prove Theorem \ref{7}. First we will survey the necessary tools  needed in the proof.

\subsection{Remarks on free algebras}

Let $F$ be a field and
let $X=\{x_1,x_2,\ldots\}$ be a set of countably many noncommuting indeterminates.
The free algebra $F\langle X\rangle$ consists of polynomials in $x_1,x_2,\ldots$.
We say that  $f=f(x_1,\ldots,x_d)\in F\langle X\rangle$ is a  {\em multilinear polynomial}  if it is of the form
\begin{equation*}%\label{poly}
f= \sum_{\sigma\in S_d} \lambda_\sigma x_{\sigma(1)}\ldots  x_{\sigma(d)},
\end{equation*}
where $ \lambda_\sigma\in F$ and  $S_d$ is the symmetric group of degree $d$. A nonzero polynomial
 $f=f(x_1,\ldots,x_d)\in F\langle X\rangle$ is said to be a {\em polynomial identity} of an
$F$-algebra $\A$ if $f(a_1,\ldots,a_n) =0$ for all $a_1,\ldots,a_n\in  \A$. For example,  $\A$ is a commutative algebra  if and only if $[x_1,x_2] = x_1x_2-x_2x_1$ is its polynomial identity. By the famous Amitsur-Levitzki theorem, the matrix algebra $M_n(F)$ has a polynomial identity of degree $2n$. On the other hand, $M_n(F)$ does not have  polynomial identities of degree $ < 2n$; the proof of that will be used in our arguing.

Let $F\langle X\rangle_0$ denote  the  subalgebra  of  $F\langle X\rangle$ generated by $1$ and all polynomials of the form  $[x_{k_1},[x_{k_2},\dots , [x_{k_{r-1}},x_{k_r}]\dots]]$. That is to say, $F\langle X\rangle_0$ is the subalgebra  generated by $1$ and all Lie polynomials of degree $\ge 2$. Defining the partial derivative  $\frac{\partial f}{\partial x_i}$ of $f\in F\langle X\rangle$ in a self-explanatory manner it is easy to see that $\frac{\partial f}{\partial x_i}$ is always $0$ if $f\in  F\langle X\rangle_0$. Moreover, if char$(F)=0$, then this property is characteristic for elements from $F\langle X\rangle_0$ \cite[Proposition 3]{Ger}. Note that if $f = f(x_1,\ldots,x_d)$ is a multilinear polynomial, its partial derivative can be simply obtained by formally replacing $x_i$ by $1$:  $$\frac{\partial f}{\partial x_i} = f(x_1,\ldots,x_{i-1},1,x_{i+1},\ldots,x_d).$$

\subsection{The Platonov-\DJ okovi\' c theory }
Let $K$ be an algebraically closed field of characteristic 0. We will write  $M_n$ for  $M_n(K)$.  We have
$M_n =M_n^0 \oplus K\cdot 1$,
where $1$ is the identity matrix, and $M_n^0$ is the space of all $x\in M_n$ with ${\rm tr}(x) =0$.
%By $GL(n^2 - 1)$ we denote the subgroup of $GL(n^2)$ which fixes $1$ and preserves
%$M_n^0$. %Note that G -~ PSL(n).
%Further,
Let $O(n^2)$ be the subgroup of $GL(n^2)$ which preserves the nondegenerate
symmetric bilinear form ${\rm tr}(xy),\; x,y\in M_n$.
The subgroup
of $O(n^2)$ consisting of operators which fix the identity matrix $1$ will be denoted by $O(n^2 - 1)$.
 % Since the decomposition \eqref{1.1} is orthogonal with respect to this form, $O(n^2- 1)$ preserves $M_n^0.$
The identity components of $O(n^2)$ and $O(n^2 - 1)$, i.e., subgroups consisting of matrices whose determinant is $1$,
%(i.e.,  the connected components with respect to Zariski topology that contain the identity)
 will be denoted
by $SO(n^2)$ and $SO(n^2 - 1)$, respectively.

 By $G$ we denote  the subgroup of $GL(n^2)$   consisting of all similarity transformations
$x\mapsto axa^{-1}$ with $a\in GL(n)$.
%As $G$ also preserves the form
%${\rm tr}(xy)$ and fixes $1$, we have $G\subseteq SO(n^2 - 1)$.
%Note that the transposition map $\tau$ lies in $O(n^2 - 1)$ and that $\det(\tau) = (-1)^{ n(n-1)/2}$.
Next, by $P$  we denote the subgroup of $GL(n^2)$ which acts
trivially on $M_n^0$ and $M_n/M_n^0$, and by $Q$  the subgroup of
$GL(n^2)$ which acts trivially on  $K1$ and $M_n/K1$. Thus, $Q$
consists of  all transformations $x\mapsto x + f(x)1$, where $f$
is a linear functional on $M_n$ such that $f(1)=0$.  Let $T$
denote the subgroup of $GL(n^2)$ which acts by scalar
transformations on $M_n^0$ and $K1$, and  set 
$T_1 = T \cap SL(n^2).$

By $\tau$ we denote the transposition map. However, we will  write $x'$ for the transpose of $x$.
Note that the group $GQT\tr$
consists of all invertible linear transformations $\sigma:M_n\to M_n$ that take one of the forms
$
\sigma(x) =\alpha axa^{-1} + f(x)1$ or
$
\sigma(x) = \alpha ax'a^{-1} + f(x)1,
$
where $\alpha\in K^*$, $a\in GL(n),$ and $f$ is a linear functional on $M_n$ such that
$f(1)\neq -\alpha.$

The algebra of all linear transformations on $M_n$ can be identified with the tensor product algebra $M_n \otimes M_n^{opp}$, where $ M_n^{opp}$ is the opposite algebra of $M_n$, via the action
$(a\otimes b)(x) = axb$, $a,b,x\in M_n$.

With respect to the notations just introduced, the following theorem can be extracted from \cite[Theorems A and B]{PD}.

\begin{theorem}\label{izrekA} {\bf (Platonov-\DJ okovi\' c)}
Let $ \Gamma$ be a proper connected algebraic subgroup of $SL(n^2)$, $n\ne 4$, containing
$G$. Then $\Gamma$ is one of the groups:
\begin{enumerate}
 \item[(a)] $G$, $GQ$, $GT_1$,  $GQT_1$,
 \item[(b)] $SO(n^2 - 1)$, $SO(n^2 - 1)T_1$, $SO(n^2 - 1)P$, $SO(n^2 - 1)Q$, \newline $SO(n^2 - 1)PT_1$, $SO(n^2 - 1)QT_1$, $SL(n^2 - 1)$, $SL(n^2 - 1)T_1$,\newline $SL(n^2 - 1)P$, $SL(n^2 - 1)Q$, $SL(n^2 - 1)PT_1$, $SL(n^2 - 1)QT_1$,\newline $tSO(n^2)t^{-1}$ for some $t\in T_1$,
 \item[(c)] $GP$, $GPT_1$,
 \item[(d)] $t\Bigl(SL(n)\otimes SL(n)^{opp}\Bigr)t^{-1}$ for some $t\in T_1$.
\end{enumerate}
Moreover, if $\Gamma$ is one of the  groups listed in {\rm (a)}, then its normalizer in $GL(n^2)$ is a subgroup of $GQT\langle \tau \rangle.$
\end{theorem}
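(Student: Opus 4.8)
Since the statement is explicitly extracted from \cite[Theorems A and B]{PD}, the plan is not to reprove the full classification but to recall those two theorems, match their notation with the one fixed above, and read off the list (a)--(d); the part I would genuinely spell out is the concluding normalizer assertion. Before that, let me indicate why a list of this shape is to be expected. Passing to Lie algebras, one is classifying the intermediate Lie subalgebras $\mathrm{Lie}(G)\subseteq\mathfrak{l}\subseteq\mathfrak{sl}(n^2)$. Here $\mathrm{Lie}(G)$ is the image of $\mathfrak{sl}_n$ acting on $M_n=M_n^0\oplus K1$ by $\mathrm{ad}$, so the first step is to decompose $\mathfrak{sl}(n^2)=\mathfrak{sl}(M_n)$ as a module over $\mathfrak{sl}_n$; since $M_n^0$ is the adjoint module (irreducible and self-dual in characteristic $0$) and $K1$ is trivial, this reduces to decomposing $(\mathfrak{sl}_n\oplus K)^{\otimes 2}$ into irreducibles and discarding one trivial summand. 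One then determines which sums of isotypic constituents are closed under the bracket; each such intermediate Lie algebra integrates to a connected group, and the invariant form $\mathrm{tr}(xy)$ is what produces the orthogonal and special-linear families in (b). The substance of \cite{PD} is exactly this bracket bookkeeping together with the verification that nothing else occurs, and it is also where the exceptional behavior at $n=4$ enters through a coincidence in the decomposition; I would simply invoke it.

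For the normalizer statement I would argue directly. Let $\Gamma$ be one of $G,GQ,GT_1,GQT_1$. Each of these has its unipotent radical contained in $Q$ and has $G\cong PGL(n)$ as a maximal connected semisimple subgroup, and any two such subgroups are conjugate under the unipotent radical, hence under $Q$ (for the reductive cases $G$ and $GT_1$ this is even simpler, as $G=[\Gamma,\Gamma]$ is then characteristic). Consequently, for $\sigma\in N_{GL(n^2)}(\Gamma)$ the subgroup $\sigma G\sigma^{-1}$ is again such a subgroup of $\Gamma$, so $\sigma G\sigma^{-1}=qGq^{-1}$ for some $q\in Q$, whence $q^{-1}\sigma\in N_{GL(n^2)}(G)$. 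It therefore suffices to compute $N_{GL(n^2)}(G)$ and check that it lies in $GQT\tr$.

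For the latter, note that $\sigma$ normalizes $G$ precisely when $g\mapsto\sigma g\sigma^{-1}$ is again the conjugation action of $PGL(n)$, i.e.\ $\sigma$ intertwines the representation $\rho\colon[a]\mapsto(x\mapsto axa^{-1})$ with $\rho\circ\beta$ for some $\beta\in\mathrm{Aut}(PGL(n))$. Every such $\beta$ is inner or inner composed with the transpose-inverse automorphism; inner automorphisms are realized by elements of $G$, while the transpose-inverse is realized by $\tau$, since $\tau\rho([a])\tau^{-1}=\rho([(a')^{-1}])$. The invertible self-intertwiners of $\rho$ are, by Schur's lemma and the multiplicity-one decomposition $M_n=M_n^0\oplus K1$ into an irreducible and the trivial module, exactly the operators scaling $M_n^0$ and $K1$ separately, that is, the group $T$. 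Hence $N_{GL(n^2)}(G)=GT\tr$, and combining this with the previous paragraph gives $N_{GL(n^2)}(\Gamma)\subseteq Q\cdot GT\tr\subseteq GQT\tr$, as required.

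The main obstacle is of course the classification itself, which is the content of \cite{PD} and which I would not attempt to reproduce: the delicate check that the four families are exhaustive, and the isolation of the $n=4$ exception, are precisely the inputs I am importing. On the part I would carry out, the one point demanding care is the reduction of the self-intertwiners of $\rho$ to $T$, which rests on the irreducibility of the adjoint module $M_n^0$ in characteristic $0$ and its appearing with multiplicity one; this is exactly where the hypotheses on the ground field are used.
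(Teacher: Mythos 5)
Your proposal is correct, and on the classification (a)--(d) it does exactly what the paper does: the paper contains no proof of this theorem at all, presenting it as extracted from \cite[Theorems A and B]{PD}, so in the paper \emph{both} the list and the normalizer assertion are imported by citation. Where you genuinely diverge is the ``Moreover'' clause: the paper takes it from \cite{PD} as well, while you prove it directly, and your argument is sound. The reduction works because for $\Gamma\in\{G,GQ,GT_1,GQT_1\}$ the unipotent radical is trivial or equal to $Q$; Mostow's conjugacy theorem for Levi subgroups (valid in characteristic $0$), together with the observation that a connected semisimple subgroup of the Levi $GT_1$ must project trivially to the central torus and hence lie in $G$ (a small step you gloss over but which is easily supplied), shows that the maximal connected semisimple subgroups of $\Gamma$ are precisely the $Q$-conjugates of $G$, giving $N_{GL(n^2)}(\Gamma)\subseteq Q\cdot N_{GL(n^2)}(G)$. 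The computation $N_{GL(n^2)}(G)=GT\langle\tau\rangle$ then follows, as you say, from the fact that every algebraic automorphism of $PGL(n)$ is inner or inner composed with transpose-inverse (the latter realized by $\tau$, via $\tau\rho([a])\tau^{-1}=\rho([(a')^{-1}])$), plus Schur's lemma applied to the multiplicity-free decomposition $M_n=M_n^0\oplus K1$ into the adjoint module and the trivial module. What each approach buys: the paper's pure citation is the shortest path and keeps the exposition uniform, but leaves the normalizer claim as a black box; your argument makes that part self-contained and exposes that it rests only on standard structure theory (Levi conjugacy, $\mathrm{Aut}(PGL_n)$, Schur), not on the deep classification machinery of \cite{PD} --- which, including the $n\neq 4$ exclusion, is the one ingredient that genuinely must remain a citation, as you correctly acknowledge.
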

Let us point out that all groups listed in (b) contain $SO(n^2-1)$. For $tSO(n^2)t^{-1}$, $t\in T_1$, this can be easily checked, while for others this is entirely obvious. Conversely, only the groups from (b) contain $SO(n^2-1)$.

%\begin{theorem}\label{izrekA}
%Let $ \Gamma$ be a proper connected algebraic subgroup of $SL(n^2)$ containing
%$G,$ $n\neq 4.$ Then

%(i) if contains $P$ or $Q$, $ \Gamma$ is one of the groups
%\begin{equation}\label{1.2}
%HP, HQ, HPT_1, HQT_1,
%\end{equation}

%(ii) or otherwise $\Gamma$ is one of the groups

%\begin{equation}
%tSO(n^2)t^{-1}, t(SL(n)\otimes SL(n)^{opp})t^{-1}, H, HT_1,
%\end{equation}

%where $H \in \{ G, SO(n^2 - 1), SL(n^2 - 1 )\}$, $t\in T_1.$

%\end{theorem}

%We also need an additional result from the same article \cite{PD}, that the normalizer of the group $GQT_1$ in $GL(n^2)$ is equal to $GQT\langle \tau \rangle.$

\subsection{Main theorem}

Let $f=f(x_1,\ldots,x_d)\in F\langle X\rangle$ be a nonzero multilinear polynomial of degree $d$. Our goal is to show that under suitable assumptions a  linear map $\phi:M_n(F)\to M_n(F)$ that preserves zeros of $f$  is of the standard form \eqref{sf}. In the present  setting this can be  more specifically described as
\begin{equation}\label{std0}
\phi(x) =\alpha axa^{-1} + f(x)1
\;\;\mbox{or}\;\;
\phi(x) = \alpha ax'a^{-1} + f(x)1,
\end{equation}
where $\alpha\in F^*$, $a\in GL(n,F),$ and $f$ is a linear functional on $M_n(F)$ such that
$f(1)\neq -\alpha.$

 If $d$ was $\ge 2n$, then, by the Amitsur-Levitzki theorem, $f$ could be  a polynomial identity, making the  assumption that $\phi$ preserves zeros of $f$  meaningless. We will therefore assume that $d < 2n$.  Further, we will assume that $n\ne 2,4$. It is well-known that the $n=2$ case must be excluded when dealing with the polynomial $x_1x_2 - x_2x_1$. On the other hand, it seems possible that that the exclusion of $n=4$ is unnecessary. We need it in order to apply Theorem \ref{izrekA}.
 %Let us remark that our theorem is trivially true if $n=1$, but in the proof we will tacitly assume that $n\ne 1$.
Another assumption that we have to require is that char$(F)=0$. This one is  also used because of applying Theorem \ref{izrekA} and is possibly redundant. Further, we will assume that $\phi$ is bijective. This is a usual and certainly necessary assumption in this context (cf. \cite{BrSe} that deals with the polynomial $x_1x_2 - x_2x_1$ without assuming bijectivity). Finally, we will assume that $\phi(1)\in F\cdot 1$; the (un)necessity of this assumption will be discussed in the next subsection.

Let us make a few  comments and introduce some notations before stating and proving the theorem. 
We have to warn the reader that, just as in \cite{PD}, we are assuming a basic familiarity with the concepts related to linear algebraic groups. A good general reference is  Borel's book \cite{Bor}.

We are going to consider a bijective linear map $\phi$ on $M_n(F)$ that preserves the
  set of zeros of  $f$,
$$S_F=\{(a_1,\dots,a_d)\in M_n(F)^d\;|f(a_1,\dots,a_d)=0\}.$$
This is an algebraic set. Indeed, considering $S_F$ as  a subset of  $(F^{n^2})^d$, it  is equal to the
 vanishing set
of polynomials $\{f((x^1_{ij}),\dots,(x^d_{ij}))_{st}|\;1\leq s,t\leq n\}$. Using \cite[Lemma 3]{PD} (or \cite[Lemma 1]{Dix}) it can be therefore deduced that $\phi^{-1}$ also preserves $S_F$, i.e., it also satisfies the condition we are interested in (and so, in fact, $\phi(S_F) = S_F$). Accordingly, the set of all linear maps satisfying this condition is an algebraic group.  The goal of our theorem is to describe those of its elements that also  preserve scalar matrices.

By $K$ we denote an algebraic closure of $F.$ Since $\phi\in GL(n^2,F)\subseteq GL(n^2)\, (=GL(n^2,K))$ preserves $S_F$, it also preserves  its Zariski closure $S$ in $K^{n^2d}.$ This is an algebraic set, and therefore, by the same argument as above,  $$\G=\{\psi\in GL(n^2,K)|\; \psi(S)\subseteq S\}$$
is  a group (and  $\psi(S)= S$ for every $\psi\in\G$). By $M$ we denote the (algebraic) subgroup of $GL(n^2)$ consisting of all maps that preserve scalar matrices. Thus $\phi$ is contained in the algebraic group $\G\cap M$.

For every algebraic group $L$ defined over $F$ we denote by $L_F$ the group of $F$-rational points of $L.$ We have $(GQT)_F=G_FQ_FT_F$ and $(GQT\tr)_F$ consists of  elements in $ GL(n^2,F)$ that are of the form \eqref{std0}; cf. \cite[p 176]{PD}. Thus, if one can establish that
\begin{equation}\label{cilj}
\G\cap M\subseteq GQT\tr,
\end{equation}
then $\phi$, which is defined over $F$, lies in $ G_FQ_FT_F\tr$ and is therefore of the standard form \eqref{std0}.

Note that $S_F$ is invariant under the $G_F$-action given by $$g\cdot(a_1,\dots,a_d):=(g(a_1),\dots,g(a_d)).$$ Hence its closure $S$ is also invariant under $G_F,$  so that
$G_F\subseteq \G$.
Since char$(F)=0$  and  $G$ is connected, the rational points  $G_F$ are Zariski-dense in $G$  \cite[Corollary 18.3]{Bor}. From this one infers that $G =\overline{G_F}\subseteq \G$; moreover, $G\subseteq \G\cap M$.
In a similar fashion, by first noticing that $S_F$ is closed under multiplication by nonzero scalars in $F$ we see that
$\G\cap M$ is closed under  multiplication by nonzero scalars in $K$; that is, if $a\in\G$ and $\lambda\in K^*$, then $\lambda a \in \G\cap M$.

Let us also mention that if $H$ is an arbitrary algebraic group, then its identity component
(i.e., the connected component with respect to Zariski topology that contains the identity)
 is also an algebraic group, and moreover, it is a normal subgroup of $H$ \cite[Proposition 1.2]{Bor}.

We now have enough information to prove the following theorem.

\begin{theorem}\label{7}
Let $F$ be a field with {\rm char}$(F) =0$, let $f\in F\langle X\rangle$ be a  multilinear polynomial of degree $d\ge 2$, and let $\phi:M_n(F)\to M_n(F)$ be a bijective linear map that preserves zeros of $f$ and satisfies $\phi(1)\in F\cdot 1$. Assume that  $n\neq 2, 4$ and $d< 2n$.
Then $\phi$ is of the standard form \eqref{std0}.
\end{theorem}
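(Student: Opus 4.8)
The plan is to carry out the program already set up before the statement: to prove the inclusion \eqref{cilj} and then read off \eqref{std0} from the description of the $F$-rational points of $GQT\tr$. Write $H=\G\cap M$, the algebraic group that contains $\phi$; recall that $G\subseteq H$ and that $H$ is stable under multiplication by $K^*$, so in particular $H$ contains all scalar operators. Since the similarity transformation $x\mapsto axa^{-1}$ has determinant $1$ (its eigenvalues are the ratios $\lambda_i/\lambda_j$ of the eigenvalues of $a$, whose product is $1$), we have $G\subseteq SL(n^2)$. I would then set $\Gamma$ to be the identity component of $H\cap SL(n^2)$. This $\Gamma$ is a connected algebraic subgroup of $SL(n^2)$; it contains $G$ (as $G$ is connected and lies in $H\cap SL(n^2)$); it is contained in $\G$ and in $M$; and, being the identity component of the normal subgroup $H\cap SL(n^2)$ of $H$, it is normal in $H$, whence $H\subseteq N_{GL(n^2)}(\Gamma)$.

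The idea is now to apply Theorem \ref{izrekA} to $\Gamma$ and to show that $\Gamma$ must be one of the groups of part (a), for then its normalizer lies in $GQT\tr$, so that $H\subseteq GQT\tr$ and \eqref{cilj} follows. First one must know that $\Gamma$ is proper: if $\Gamma=SL(n^2)$, then $H\cap SL(n^2)=SL(n^2)$, so $SL(n^2)\subseteq M$, which is false since $SL(n^2)$ contains operators that do not map scalar matrices to scalar matrices. Thus $\Gamma$ is a proper connected subgroup of $SL(n^2)$ containing $G$, and as $n\ne 4$, Theorem \ref{izrekA} applies. The constraint $\Gamma\subseteq M$ eliminates most candidates at once. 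Indeed, an operator in $P$ sends $1$ to $1+v$ with $v$ a nonzero traceless matrix and hence does not preserve scalars, so $P\not\subseteq M$; this rules out all groups of (c) as well as $SO(n^2-1)P$, $SO(n^2-1)PT_1$, $SL(n^2-1)P$, $SL(n^2-1)PT_1$ from (b). Likewise a generic $x\mapsto axb$ with $\det a=\det b=1$ does not preserve scalars, so $SL(n)\otimes SL(n)^{opp}\not\subseteq M$, and the same holds for its $T_1$-conjugates; this rules out (d).

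The remaining, and genuinely hard, step is to eliminate the surviving groups of (b). As noted after Theorem \ref{izrekA}, each of them contains $SO(n^2-1)$ (and $SL(n^2-1)\supseteq SO(n^2-1)$), so it suffices to prove that $SO(n^2-1)\not\subseteq\G$, i.e. that the special orthogonal group of $M_n^0$ relative to the trace form does not preserve $S$. Here I expect the main obstacle to lie. The plan is to argue by contradiction and to exploit that, since $d\ge 2$, $d<2n$ and $n\ne 2$ force $n\ge 3$, we have $d<2n\le n^2-1$, so the invariants of $SO(n^2-1)$ acting diagonally on $d$-tuples of traceless matrices are generated by the trace forms ${\rm tr}(a_ia_j)$ (no determinant invariant is available, since that would need $n^2-1$ vectors). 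Thus the orbit of a tuple with nondegenerate Gram matrix fills out, up to closure, all tuples with the same Gram data, and invariance of $S$ under $SO(n^2-1)$ would propagate the vanishing of $f$ from a single sufficiently generic zero to a Zariski-dense set of $d$-tuples, forcing $f$ to be a polynomial identity of $M_n$ of degree $d<2n$. This contradicts the fact, recalled above, that $M_n$ has no polynomial identity of degree $<2n$; I would in fact produce both the required generic zero and the contradicting nonzero value of $f$ through the matrix-unit (path) substitutions used in the proof of that fact. The delicate points, where the degree bound $d<2n$ and the multilinearity of $f$ must be used with care, are the existence of a zero of $f$ whose traceless entries have a nondegenerate Gram matrix, and the passage from invariance on a single orbit to genuine Zariski-density.

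Once $SO(n^2-1)\not\subseteq\G$ is established, $\Gamma$ can be none of the groups of (b), and combined with the previous paragraph $\Gamma$ must be one of $G$, $GQ$, $GT_1$, $GQT_1$, i.e. a group of part (a). By the last assertion of Theorem \ref{izrekA} its normalizer in $GL(n^2)$ lies in $GQT\tr$, and since $H\subseteq N_{GL(n^2)}(\Gamma)$ we obtain $\G\cap M=H\subseteq GQT\tr$, which is precisely \eqref{cilj}. Finally $\phi$ is an $F$-rational point of $GQT\tr$, so by the description of $(GQT\tr)_F$ recalled before the theorem it has one of the two forms in \eqref{std0}, completing the proof.
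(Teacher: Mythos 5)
Your group-theoretic reduction is correct and is essentially the one the paper itself carries out: pass to the identity component $\Gamma$ of $(\G\cap M)\cap SL(n^2)$, check it is a proper connected subgroup of $SL(n^2)$ containing $G$, invoke Theorem \ref{izrekA}, discard (c) and (d) because they are not contained in $M$, discard (b) via $SO(n^2-1)\not\subseteq\G$, and use the normalizer statement to obtain \eqref{cilj} and hence \eqref{std0}. (Your normality observation even streamlines the paper's final step: since your $\Gamma$ is normal in all of $\G\cap M$, you avoid the paper's rescaling of a general $\alpha\in\G\cap M$ into $SL(n^2)$.) But all of this scaffolding is set up in the paper \emph{before} the statement of Theorem \ref{7}; the actual content of the paper's proof is the verification of \eqref{so}, that is, of $SO(n^2-1)\not\subseteq\G$, and this is exactly the step you do not prove.

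Your sketch of \eqref{so} contains a step that fails as stated and a construction you never supply. First, the $SO(n^2-1)$-orbit of a single $d$-tuple lies in one fiber of the Gram map (with the scalar parts frozen), a proper closed subvariety of $M_n^d$ of codimension at least $d+d(d+1)/2$; it is never Zariski dense, so ``propagating the vanishing of $f$ from a single sufficiently generic zero to a Zariski-dense set'' cannot work literally. What your Witt/FFT argument actually needs is either a pair of tuples with equal scalar parts and equal \emph{nondegenerate} Gram matrices, one lying in $S$ and one outside the zero set of $f$, or a proof that the Gram map restricted to $S$ is dominant; you establish neither. Second, the matrix-unit ``path'' substitutions you appeal to (those used to show $M_n$ has no polynomial identity of degree $<2n$, and those the paper uses) provably cannot serve as your generic zero: in such a tuple each off-diagonal unit $e_{i,i+1}$ is isotropic for the trace form and orthogonal to every other entry of the tuple (its partner $e_{i+1,i}$ is absent), so the Gram matrix has zero rows and is degenerate, and Witt's theorem does not apply. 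The paper sidesteps all of this by an explicit construction: the map $\theta\in SO(n^2-1)$ swapping $e_{12}\leftrightarrow e_{21}$ and $e_{11}\leftrightarrow e_{33}$ and fixing the remaining matrix units carries a path tuple on which $f$ does not vanish to a tuple whose entries have zero product in every order, whence $\theta\notin\G$. Until you produce a zero of $f$ in $S$ with nondegenerate Gram data together with a matching non-zero (or prove the dominance statement), the crucial step \eqref{so}, and with it the theorem, remains unproved.
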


\begin{proof}
As noticed above, it suffices to establish  \eqref{cilj}. We claim that it is enough to prove
\begin{align}
SO(n^2-1)\not \subseteq \wt{G}.\label{so}
%P \not \subseteq \G,\label{P}\\
% t\Bigl(SL(n)\otimes SL(n)^{opp}\Bigr)t^{-1}  \label{tenz}\not \subseteq \G.
\end{align}
Indeed, assume \eqref{so} holds. Consider  $H = (\G\cap M)\cap SL(n^2)$ and let $H_1$ be the identity component of $H$. Then $H_1$ is an algebraic group, it is connected, and, since $G\subseteq \G\cap M$, it contains $G$. Therefore $H_1$ is one of the groups listed in Theorem \ref{izrekA}. As $H_1\subseteq \G\cap M$ and \eqref{so} holds, we may exclude the possibilities listed in (b). Furthermore,
as the groups from (c) and (d) are not contained in $M$,
 $H_1$ must be one of the groups listed in (a). Theorem \ref{izrekA} now tells us that the  normalizer of $H_1$ in $GL(n^2)$ is a subgroup of $GQT\langle \tau \rangle$. Since $H_1$ is a normal subgroup of $H$ it follows that $H$ is contained in $GQT\langle \tau \rangle$. Now pick $\alpha\in \G\cap M$. As mentioned above, $\G\cap M$ is closed under multiplication by nonzero scalars. Therefore $\det(\alpha)^{-1}\alpha\in H \subseteq  GQT\langle \tau \rangle$. As $GQT\langle \tau \rangle$ is also closed under multiplication by nonzero scalars it follows that $\alpha = \det(\alpha)\bigl(\det(\alpha)^{-1}\alpha\bigr) \in GQT\langle \tau \rangle$. This proves \eqref{cilj}.

Thus, let us prove \eqref{so}. Assume first that $d$ is an even number.
 Set $k=\frac{d}{2} + 1$ and note that $k \le n$. Consider the sequence of $d$ matrix units
\begin{equation}\label{enote}
e_{11},\,e_{12},\,e_{22},\,e_{23},\,\,e_{33},\,e_{34},\dots,e_{k-1,k-1},\, e_{k-1,k}.
\end{equation}
The product of these matrices in an arbitrary order except in the given one is equal to zero. Therefore, for an appropriate permutation  $(a_1,\dots,a_{d})$ of the  matrices (\ref{enote}) (corresponding to a nonzero coefficient of $f$) we have $f(a_1,\dots, a_{d})\neq 0$. Now define a linear transformation $\theta$ on $M_n(K)$ according to
$$\theta(e_{12}) = e_{21},\,\,\theta(e_{21}) = e_{12},\,\,\theta(e_{11}) = e_{33},\,\,\theta(e_{33}) = e_{11},$$
%$$\theta(e_{12}) = e_{13},\,\,\theta(e_{21}) = e_{31},\,\,\theta(e_{13}) = e_{12},\,\,\theta(e_{31}) = e_{21},$$
 and $\theta$ fixes all other matrix units. A bit tedious but straightforward verification shows that $\theta$ lies in $SO(n^2-1)$. Now, $\theta$ maps the matrices from \eqref{enote} into the matrices
 $$e_{33},\,e_{21},\,e_{22},\,e_{23},\,e_{11},\,e_{34},\dots,e_{k-1,k-1},\,e_{k-1,k}.$$
Their product  in an arbitrary order is  $0$, so that  $f(\theta (a_1),\dots,\theta(a_{d}))=0$. This implies that $\theta \not\in \G$. Namely, if $\theta$ was in $\G$ then   $\theta^{-1}$
would map $S_F$ into $S$ which is contained in the set of zeros of $f$. Thus \eqref{so} is proved in this case.

The case where $d$ is odd requires only minor modifications. One has to consider the matrix units
$$
e_{11},\,e_{12},\,e_{22},\,e_{23},\,\dots,e_{k-1,k},\,e_{k,k},
$$
where $k = \frac{d+1}{2}\le n$, and then follow the above argument.
\end{proof}

\subsection{Preserving scalar matrices} It seems plausible that the assumption from Theorem \ref{7} that  $\phi(1)\in F\cdot 1$ can be removed.
To this end one should examine carefully the groups from (c) and (d).
However, apparently  this would require a detailed and tedious analysis  making the proof much lengthier. We have therefore decided  to omit this problem in its full generality here, and perhaps return to it in a more technical paper. We will now restrict our attention to polynomials from $F\langle X\rangle_0$, which are of special interest in view of \cite{DD}.  For these polynomials the argument based on
the Platonov-\DJ okovi\' c theory is rather short. However, we will use an alternative approach, based on the following elementary lemma which is perhaps of independent interest.

\begin{lemma} \label{el}
Let $f\in F\langle X\rangle$, where $F$ is an arbitrary field, be a  multilinear polynomial of degree $d$. Let $n\ge 2$ be such that  $d < 2n$. If $c\in M_n(F)$ satisfies
\begin{equation} \label{ele}
f(c,a_2,\ldots,a_d) = f(a_1,c,a_3,\ldots,a_d) = \ldots = f(a_1,\ldots,a_{d-1},c) = 0
\end{equation}
for all $a_1,\ldots,a_d\in M_n(F)$, then $c\in F\cdot 1$.
\end{lemma}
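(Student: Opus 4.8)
The plan is to study the whole solution set
\[
V=\{c\in M_n(F):\ c \text{ satisfies \eqref{ele}}\}
\]
at once, rather than a single $c$, and to exploit two features of $V$. First, $V$ is a linear subspace of $M_n(F)$, because \eqref{ele} is a system of conditions linear in $c$. Second — and this is the structural observation I would build on — $V$ is invariant under conjugation by every $P\in \mathrm{GL}_n(F)$. Indeed, since $f$ is a polynomial, conjugation is equivariant: for $c\in V$ and any slot $i$,
\[
f(a_1,\dots,PcP^{-1},\dots,a_d)=P\,f(P^{-1}a_1P,\dots,c,\dots,P^{-1}a_dP)\,P^{-1}=0
\]
by \eqref{ele} (with $c$ in the $i$-th slot), so $PcP^{-1}\in V$. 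In particular $V$ is stable under conjugation by permutation matrices. The goal then becomes to show that $V\subseteq F\cdot 1$.

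The first — and main — step is to prove that every $c\in V$ is diagonal; equivalently that $c_{12}=0$, the remaining off-diagonal entries following from permutation-conjugation invariance, since $(P_\pi cP_\pi^{-1})_{12}=c_{\pi^{-1}(1),\pi^{-1}(2)}$. Here I would use the same mechanism as in the proof of Theorem \ref{7} and in the classical proof that $M_n$ has no identity of degree $<2n$: a product of the staircase matrix units \eqref{enote} is nonzero for exactly one ordering. Concretely, take the degree-$d$ staircase (its indices reach some $k\le n$, which is where $d<2n$ enters), delete the ascending unit $e_{12}$, and form the sequence $e_{11},c,e_{22},e_{23},\dots,e_{k-1,k}$ of $d$ matrices. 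Deleting $e_{12}$ destroys the only link between index $1$ and index $2$, so the matrix units alone can no longer be chained to a nonzero product; a short counting argument balancing the row/column indices of the would-be product shows that the only nonzero ordering of these $d$ matrices is the displayed one, in which $c$ is forced into the missing step and contributes precisely the entry $c_{12}$, the product being $c_{12}e_{1k}$. Since the $d$ matrices are pairwise distinct, I can assign them to the $d$ slots so that this surviving ordering corresponds to a monomial of $f$ with a nonzero coefficient $\lambda_{\sigma_0}$; then $f(\dots)=\lambda_{\sigma_0}c_{12}e_{1k}$, and as $c$ occupies one slot, \eqref{ele} forces $c_{12}=0$. (The degenerate case $d=2$, in which $c$ cannot be placed strictly between two units, is handled by the direct computation with $f(c,e_{11})$.)

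With $V$ now known to consist of diagonal matrices, the proof finishes quickly by combining this with conjugation invariance. If some $c\in V$ were not scalar, then, being diagonal, it would have two unequal diagonal entries $c_p\ne c_q$; conjugating by $I+e_{pq}\in \mathrm{GL}_n(F)$ gives
\[
(I+e_{pq})\,c\,(I+e_{pq})^{-1}=c+(c_q-c_p)e_{pq}\in V,
\]
a matrix with nonzero $(p,q)$-entry, contradicting that $V$ consists of diagonal matrices. Hence $V\subseteq F\cdot 1$, which is the assertion. Note that this argument is valid over an arbitrary field and in any characteristic.

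I expect the main obstacle to be the combinatorial core of the middle step: verifying rigorously that, after deleting $e_{12}$ and inserting the ``wildcard'' $c$, the displayed ordering is the \emph{unique} one giving a nonzero product, so that a single entry $c_{12}$ is read off with no interfering terms. This is exactly the delicate point in the no-low-degree-identity argument, now complicated by the fact that $c$, having all entries available, could a priori bridge the product at several places; pinning down that the index-degree balance forces $c$ into the one missing step is where the real work lies. Everything else — linearity, conjugation equivariance, and the final conjugation by $I+e_{pq}$ — is routine.
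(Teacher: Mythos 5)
Your strategy --- conjugation-invariance of the solution space $V$, the staircase with $c$ substituted for the deleted unit $e_{12}$, and the final conjugation by $I+e_{pq}$ --- is genuinely different from the paper's proof and can be made to work, but the combinatorial core is false as you state it, and it is exactly the point you yourself flagged as ``where the real work lies.'' After deleting $e_{12}$, it is \emph{not} true that the displayed ordering is the only one with nonzero product. What is true is this: the chain containing $e_{11}$ must be the singleton $(e_{11})$ (no other available unit has row or column index $1$), and the complementary chain of the units $e_{22},e_{23},e_{33},\dots$ is forced into staircase order with product $e_{2k}$. But this leaves \emph{two} admissible orderings, not one: besides $e_{11}\,c\,(e_{22}e_{23}\cdots)=c_{12}e_{1k}$ there is the reversed placement $(e_{22}e_{23}\cdots)\,c\,e_{11}=e_{2k}\,c\,e_{11}=c_{k1}e_{21}$, which is nonzero whenever $c_{k1}\neq 0$. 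Consequently your evaluation $f(\dots)=\lambda_{\sigma_0}c_{12}e_{1k}$ is wrong; the correct one is $\lambda_{\sigma_0}c_{12}e_{1k}+\lambda_{\sigma_1}c_{k1}e_{21}$, where $\sigma_1$ is the permutation realizing the reversed ordering.

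The flaw is local and repairable: $e_{1k}$ and $e_{21}$ are distinct matrix units (row $1$ versus row $2$), so the two terms cannot cancel, and \eqref{ele} still forces $\lambda_{\sigma_0}c_{12}=0$, i.e.\ $c_{12}=0$. So your argument survives once ``unique nonzero ordering'' is replaced by ``unique ordering whose product has a nonzero $(1,k)$-entry.'' It is instructive to compare with the paper's proof of Lemma \ref{el}, which faces the same difficulty and resolves it by a sandwiching trick rather than by isolating an entry: there one takes a rank-one idempotent $e$ and matrix units $h_{ij}$ of $(1-e)M_n(F)(1-e)\cong M_{n-1}(F)$, and multiplies the (zero) value $f(e,c,h_{11},h_{12},h_{22},\dots,h_{st})$ by $e$ on the left and by $h_{t1}$ on the right; since $eh_{ij}=h_{ij}e=0$, this two-sided multiplication annihilates precisely the reversed orderings that break your uniqueness claim, yielding $ech_{11}=0$, then $ec(1-e)=(1-e)ce=0$ for every rank-one idempotent $e$, hence $c\in F\cdot 1$ --- with no need for conjugation-invariance of the solution set. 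Your route, once patched, buys a self-contained argument working entry by entry; the paper's buys a cleaner algebraic mechanism that automatically suppresses the interfering terms.
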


\begin{proof}
Pick an arbitrary  rank one idempotent $e\in M_n(F)$. Then the algebra $(1-e)M_n(F)(1-e)$ is isomorphic to $M_{n-1}(F)$, so it contains matrix units $h_{ij}$, $1\le i,j \le n-1$, i.e., elements satisfying $h_{ij}h_{kl} = \delta_{jk}h_{il}$ and $\sum_{k=1}^{n-1} h_{kk} = 1-e$.

Without loss of generality we may assume that $x_1x_2\ldots x_d$ is a monomial of $f$. We set $(s,t) := (\frac{d}{2}-1,\frac{d}{2})$ if $d$ is even and  $(s,t) := (\frac{d-1}{2},\frac{d-1}{2})$ if $d$ is odd. In any case we have $t\le {n-1}$. Examining all possible monomials of $f$ one easily notices that
$$
e\cdot f(e,c,h_{11},h_{12},h_{22},h_{23},\ldots,h_{st})\cdot h_{t1} = ech_{11}.
$$
Since $f(e,c,h_{11},h_{12},h_{22},h_{23},\ldots,h_{st})=0$ by our assumption, we thus have $ech_{11}=0$. Similarly, by permuting the $h_{ij}$'s, we see that $ech_{kk}=0$ for every $k$. Accordingly, $ec(1-e)=0$. In a similar fashion, by using $f(a_1,\ldots,a_{d-2},c,a_d) = 0$, we get $(1-e)ce=0$. Hence it follows that $c$ commutes with every rank one idempotent $e$. But then $c\in F\cdot 1$.
\end{proof}

\begin{corollary}\label{70}
Let $F$ be a field with {\rm char}$(F) =0$, let $f\in F\langle X\rangle_0$ be a multilinear polynomial of degree $d\ge 2$, and let $\phi:M_n(F)\to M_n(F)$ be a bijective linear map that preserves zeros of $f$. Assume that  $n\neq 2, 4$ and $d< 2n$.
Then $\phi$ is of the standard form \eqref{std0}.
\end{corollary}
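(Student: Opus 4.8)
The plan is to reduce to Theorem \ref{7} by showing that the extra hypothesis $\phi(1)\in F\cdot 1$ is automatic once $f\in F\langle X\rangle_0$; the work is carried by Lemma \ref{el} together with the partial-derivative characterization of $F\langle X\rangle_0$ recorded above.

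First I would exploit the defining property of $F\langle X\rangle_0$: for $f\in F\langle X\rangle_0$ every partial derivative $\frac{\partial f}{\partial x_i}$ vanishes, and since $f$ is multilinear this derivative is obtained by substituting $1$ for $x_i$. Hence, as a polynomial identity,
$$ f(x_1,\ldots,x_{i-1},1,x_{i+1},\ldots,x_d)=0 \qquad (1\le i\le d). $$
In particular $f(a_1,\ldots,a_{i-1},1,a_{i+1},\ldots,a_d)=0$ for all $a_j\in M_n(F)$ and every $i$; that is, every tuple having $1$ in some coordinate is a zero of $f$.

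Next I would feed these zeros through the map. Fix $i$ and let $a_1,\ldots,\widehat{a_i},\ldots,a_d$ be arbitrary. Since $\phi$ preserves zeros, applying it to the zero obtained by placing $1$ in the $i$-th slot gives $f(\phi(a_1),\ldots,\phi(1),\ldots,\phi(a_d))=0$, with $\phi(1)$ in the $i$-th coordinate. Because $\phi$ is bijective, as the remaining $a_j$ range over $M_n(F)$ the images $\phi(a_j)$ range over all of $M_n(F)$; thus
$$ f(b_1,\ldots,b_{i-1},\phi(1),b_{i+1},\ldots,b_d)=0 $$
for all $b_j\in M_n(F)$ and every $i$. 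This is precisely condition \eqref{ele} of Lemma \ref{el} with $c=\phi(1)$. Since $n\ge 2$ and $d<2n$, Lemma \ref{el} applies and yields $\phi(1)\in F\cdot 1$.

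With $\phi(1)\in F\cdot 1$ established, all hypotheses of Theorem \ref{7} are met, so $\phi$ has the standard form \eqref{std0}, which completes the proof. I do not anticipate a serious obstacle: the only point requiring care is the use of bijectivity to pass from the special zeros $(\ldots,1,\ldots)$ to the full family of zeros needed to invoke Lemma \ref{el}, and the rest is a direct appeal to the partial-derivative characterization of $F\langle X\rangle_0$, to Lemma \ref{el}, and to Theorem \ref{7}.
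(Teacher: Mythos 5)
Your proposal is correct and follows essentially the same route as the paper: the paper's proof also notes that $f\in F\langle X\rangle_0$ forces $f(b_1,\ldots,1,\ldots,b_d)=0$ identically (via the vanishing of partial derivatives for multilinear $f$), pushes these zeros through $\phi$, invokes Lemma \ref{el} to get $\phi(1)\in F\cdot 1$, and then applies Theorem \ref{7}. Your write-up merely makes explicit the surjectivity step (that the $\phi(b_j)$ sweep out all of $M_n(F)$, so condition \eqref{ele} holds for $c=\phi(1)$), which the paper leaves implicit.
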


\begin{proof}
In view of Theorem \ref{7} it suffices to prove that $c:=\phi(1)$ lies in $F\cdot 1$. This is an immediate consequence of Lemma \ref{el}. Namely, since $f\in F\langle X\rangle_0$ we have
$$
f(1,b_2,\ldots,b_d) = f(b_1,1,b_3,\ldots,b_d) = \ldots = f(b_1,\ldots,b_{d-1},1) = 0
$$
for all $b_i\in M_n(F)$, and hence  \eqref{ele} follows.
\end{proof}

\section{Some special polynomials}

In this section we will consider some special multilinear polynomials 
\begin{equation}\label{ef}
f(x_1,x_2,\ldots,x_d) = \sum_{\sigma\in S_d}\lambda_{\sigma}x_{\sigma(1)}x_{\sigma(2)}\ldots x_{\sigma(d)}
\end{equation}
 for which our problem can be handled  in rather general classes of algebras.
% More precisely, for these polynomials the problem can be reduced to some still  nontrivial,  but already solved problems.
Specifically, we will consider polynomials $f$ satisfying one of the following conditions:
\begin{equation}\tag{A} \label{pd1}
\frac{\partial^{d-1} f}{\partial x_{2}\partial x_3\ldots \partial x_{d} }\ne 0,
\end{equation}
\begin{equation}\tag{B} \label{pd2}
\frac{\partial^{d-1} f}{\partial x_{2}\partial x_3\ldots \partial x_{d} }= 0 \quad\mbox{and}\quad
\frac{\partial^{d-2} f}{\partial x_{3}\partial x_4\ldots \partial x_{d} }\ne 0,
\end{equation}
\begin{equation}\tag{C}\label{pd3}
\frac{\partial^{d-1} f}{\partial x_{2}\partial x_3\ldots \partial x_{d} }= 0 \quad\mbox{and}\quad
f(x,x,\ldots,x,y)\ne 0.
\end{equation}
The conditions \eqref{pd2} and \eqref{pd3} are independent. For example,
$$
x_1x_2x_3 + x_3x_1x_2 + x_2x_3x_1 - x_2x_1x_3 - x_1x_3x_2 - x_3x_2x_1
$$
(i.e., the standard polynomial of degree $3$) satisfies  \eqref{pd2} and does not satisfy \eqref{pd3}, while
$$
x_1(x_2x_3 - x_3x_2) - (x_2x_3 - x_3x_2)x_1
$$
satisfies  \eqref{pd3} and does not satisfy \eqref{pd2}.

\subsection{Polynomials satisfying \eqref{pd1}}
We begin with an elementary lemma.

\begin{lemma}\label{L31}
Let $f$ be a multilinear polynomial satisfying \eqref{pd1}. Suppose $\A$ is a unital algebra and   $\phi:\A\to \A$ is a linear map  preserving zeros of $f$ and satisfying
 $\phi(1)\in F^*1$. 
If $a,b\in A$ are such that $ab=ba=0$, then $\phi(a)\phi(b) + \phi(b)\phi(a) =0$.
  \end{lemma}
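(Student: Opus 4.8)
The plan is to exploit the fact that condition \eqref{pd1} is nothing but the statement that the sum of all coefficients of $f$ is nonzero. Indeed, since $f$ is multilinear, substituting $x_2=\cdots=x_d=1$ collapses every monomial $x_{\sigma(1)}\cdots x_{\sigma(d)}$ to $x_1$, so that
$$
\frac{\partial^{d-1} f}{\partial x_2\partial x_3\cdots \partial x_d}=\Big(\sum_{\sigma\in S_d}\lambda_\sigma\Big)\,x_1,
$$
and hence \eqref{pd1} amounts to $\lambda:=\sum_{\sigma\in S_d}\lambda_\sigma\ne 0$ in $F$.

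Next I would feed the hypothesis $ab=ba=0$ into $f$. Writing $\phi(1)=\gamma\cdot 1$ with $\gamma\in F^*$, I first observe that the tuple $(a,b,1,\ldots,1)$ is a zero of $f$: with $x_1=a$, $x_2=b$, and all remaining variables equal to $1$, each monomial reduces (after deleting the $1$'s) to either $ab$ or $ba$, both of which vanish. Since $\phi$ preserves zeros of $f$, it follows that $f(\phi(a),\phi(b),\gamma 1,\ldots,\gamma 1)=0$, and pulling out the scalars by multilinearity gives $f(\phi(a),\phi(b),1,\ldots,1)=0$.

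The third step is to evaluate $f(u,v,1,\ldots,1)$ for arbitrary $u,v\in\A$. Splitting the sum according to whether $x_1$ precedes or follows $x_2$ in the word $x_{\sigma(1)}\cdots x_{\sigma(d)}$ yields
$$
f(u,v,1,\ldots,1)=A\,uv+B\,vu,\qquad A:=\sum_{\sigma^{-1}(1)<\sigma^{-1}(2)}\lambda_\sigma,\quad B:=\sum_{\sigma^{-1}(1)>\sigma^{-1}(2)}\lambda_\sigma,
$$
so that $A+B=\lambda$. Applying this identity with $(u,v)=(\phi(a),\phi(b))$ gives $A\,\phi(a)\phi(b)+B\,\phi(b)\phi(a)=0$.

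The apparent obstacle is that $A$ and $B$ need not be equal, so this single relation does not by itself yield the desired anticommutation. This is resolved by the symmetry of the hypothesis: since also $ba=ab=0$, interchanging the roles of $a$ and $b$ produces $A\,\phi(b)\phi(a)+B\,\phi(a)\phi(b)=0$. Adding the two relations gives $(A+B)\big(\phi(a)\phi(b)+\phi(b)\phi(a)\big)=0$, and since $A+B=\lambda\ne 0$ is invertible in $F$, we conclude $\phi(a)\phi(b)+\phi(b)\phi(a)=0$, as required.
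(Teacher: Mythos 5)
Your proof is correct and follows essentially the same route as the paper: both reduce \eqref{pd1} to $\sum_\sigma \lambda_\sigma \ne 0$, use the two zeros $(a,b,1,\ldots,1)$ and $(b,a,1,\ldots,1)$ of $f$, and add the two resulting relations so that the coefficient of $\phi(a)\phi(b)+\phi(b)\phi(a)$ becomes the full (nonzero) coefficient sum. The only cosmetic difference is that the paper normalizes $\phi(1)=1$ at the outset, while you keep $\phi(1)=\gamma 1$ and absorb the scalar by multilinearity.
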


\begin{proof} Without loss of generality we may assume that $\phi(1)=1$. Namely, if $\phi(1)= \lambda 1$ with  $0\ne\lambda\in F$, then we can replace $\phi$ by $\lambda^{-1}\phi$ which also 
preserves zeros of $f$ and  does map $1$ into $1$.

 From $ab=ba=0$ we infer
$$f(a,b,1,\ldots,1) =
f(b,a,1,\ldots,1) =0,$$
 and hence
$$f(\phi(a),\phi(b),1,\ldots,1) =
f(\phi(b),\phi(a),1,\ldots,1) =0.$$
We write $f$ as in \eqref{ef}. Note that  \eqref{pd1} simply means that
\begin{equation*}
\lambda := \sum_{\sigma\in S_d}\lambda_{\sigma} \ne 0.
\end{equation*}
Since
\begin{align*}
&f(\phi(a),\phi(b),1,\ldots,1) +
f(\phi(b),\phi(a),1,\ldots,1)\\
 = &\sum_{\sigma^{-1}(1) < \sigma^{-1}(2)}\lambda_\sigma \phi(a)\phi(b) + \sum_{\sigma^{-1}(2) < \sigma^{-1}(1)}\lambda_\sigma \phi(b)\phi(a) \\
 +& \sum_{\sigma^{-1}(1) < \sigma^{-1}(2)}\lambda_\sigma \phi(b)\phi(a) + \sum_{\sigma^{-1}(2) < \sigma^{-1}(1)}\lambda_\sigma \phi(a)\phi(b) \\
 =& \lambda\Bigl((\phi(a)\phi(b) + \phi(b)\phi(a) \Bigr),
\end{align*}
it follows that $\phi(a)\phi(b) + \phi(b)\phi(a) =0$.
\end{proof}

%The conclusion of Lemma \ref{L31} makes it possible for us to directly apply a result from \cite{ABEV}. 
Recall that a {\em Jordan epimorphism} on an algebra $\A$ is a  surjective linear map $\theta$ satisfying $\theta(a^2) =\theta(a)^2$ for every $a\in \A$. 

\begin{theorem} \label{Gra1}
Let $f$ be a multilinear polynomial of degree $d\ge 2$ satisfying \eqref{pd1}, and let
 $\A$ be a unital $C^*$-algebra.  If a continuous
surjective linear map $\phi:\A\to\A$ preserves zeros of $f$ and satisfies $\phi(1)\in \mathbb C^* \cdot 1$,
then $\phi$ is a scalar
multiple of a Jordan epimorphism.
\end{theorem}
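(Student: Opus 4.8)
The plan is to reduce Theorem \ref{Gra1} to a known description of maps that preserve orthogonality (in the Jordan sense) on $C^*$-algebras, using Lemma \ref{L31} as the key bridge. First I would normalize $\phi$ so that $\phi(1)=1$, exactly as in the proof of Lemma \ref{L31}: if $\phi(1)=\lambda 1$ with $\lambda\in\mathbb{C}^*$, replace $\phi$ by $\lambda^{-1}\phi$, which is again continuous, surjective, preserves zeros of $f$, and sends $1$ to $1$; it suffices to show this normalized map is a Jordan epimorphism. The crucial input is then Lemma \ref{L31}, which tells us that whenever $ab=ba=0$ we have $\phi(a)\phi(b)+\phi(b)\phi(a)=0$; that is, $\phi$ preserves \emph{Jordan orthogonality} in the sense that two-sided zero products are sent to zero Jordan products.

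The second step is to interpret this orthogonality-preserving property in the self-adjoint framework. For self-adjoint $a,b$ the Jordan product $a\circ b=\tfrac12(ab+ba)$ is the natural multiplication on the Jordan algebra $\A_{sa}$, and the condition $ab=ba=0$ for self-adjoint elements is precisely Jordan orthogonality $a\circ b=0$ together with operator commutativity. I would therefore restrict attention to how $\phi$ behaves on self-adjoint elements whose ordinary product vanishes from both sides --- in particular on pairs of mutually orthogonal positive elements or orthogonal projections --- and invoke an existing structure theorem for continuous linear maps on $C^*$-algebras that preserve such orthogonality. Results of this type (in the spirit of Wolff, and of the separating/disjointness-preserving map literature) assert that a continuous surjective linear map preserving orthogonality of this kind is, after the normalization $\phi(1)=1$, automatically a Jordan homomorphism; the continuity and surjectivity hypotheses in the statement are exactly what such theorems demand. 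Thus the second step is to quote the appropriate orthogonality-preserver theorem and check its hypotheses are met.

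The third step, assuming the orthogonality result yields that $\phi$ is a Jordan homomorphism on the self-adjoint part, is to extend this to all of $\A$ by linearity: every element decomposes as $x=\operatorname{Re}(x)+i\operatorname{Im}(x)$ with self-adjoint real and imaginary parts, and a linear map that is multiplicative for the Jordan product on $\A_{sa}$ and is complex-linear automatically satisfies $\phi(x^2)=\phi(x)^2$ for all $x\in\A$, giving a Jordan endomorphism; surjectivity then upgrades this to a Jordan epimorphism. Combining with the initial normalization, the original $\phi$ is a scalar multiple of a Jordan epimorphism, as claimed.

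I expect the main obstacle to be the passage from the purely algebraic consequence of Lemma \ref{L31} --- which only controls $\phi$ on pairs satisfying the strong condition $ab=ba=0$ --- to a genuine Jordan-homomorphism property valid for all elements. The lemma gives information only on \emph{two-sidedly} orthogonal pairs, whereas being a Jordan homomorphism is a statement about arbitrary products. Bridging this gap is precisely what the $C^*$-algebraic orthogonality-preserver machinery accomplishes, and it relies essentially on the richness of orthogonal decompositions available in a $C^*$-algebra (spectral projections, approximate units, the abundance of mutually orthogonal positive elements) together with continuity to push the algebraic identity to a global multiplicative one. Identifying the sharpest available such theorem, and verifying that the hypothesis $\phi(1)\in\mathbb{C}^*\cdot 1$ combined with continuity and surjectivity suffices to apply it, is where the real work of the argument lies.
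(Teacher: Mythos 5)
Your overall reduction is exactly the paper's: normalize so that $\phi(1)=1$, invoke Lemma \ref{L31} to turn two-sided zero products into zero Jordan products, and then quote a $C^*$-algebraic theorem converting this orthogonality-preserving property into the Jordan conclusion. Indeed, the paper's entire proof consists of citing \cite[Theorem 3.3]{ABEV}, whose hypothesis is precisely the conclusion of Lemma \ref{L31} for a continuous surjective linear map on a unital $C^*$-algebra, together with the Jacobson--Rickart fact that Jordan epimorphisms preserve unities \cite[Corollary 3, p.~482]{JR}; the latter combines with $\phi(1)\in\mathbb{C}^*\cdot 1$ to yield that $\phi$ is a scalar multiple of a Jordan epimorphism.

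The one place where your sketch would break down if executed literally is the middle step. You propose to restrict $\phi$ to self-adjoint elements and apply Wolff-type disjointness-preserver theorems. But $\phi$ is not assumed symmetric: there is no hypothesis $\phi(a^*)=\phi(a)^*$, so $\phi$ need not map $\A_{sa}$ into $\A_{sa}$, and there is no induced map on the Jordan algebra $\A_{sa}$ to which such results could be applied; moreover, the classical theorems in that line (Wolff's in particular) genuinely require the symmetry hypothesis. Removing that hypothesis --- i.e., obtaining the Jordan-epimorphism conclusion for a merely continuous surjective linear map from the condition that $ab=ba=0$ forces $\phi(a)\phi(b)+\phi(b)\phi(a)=0$ on \emph{arbitrary} (not just self-adjoint) pairs --- is exactly the content of \cite[Theorem 3.3]{ABEV}, which is formulated without any self-adjointness reduction. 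So your first step is exactly right, but the self-adjoint detour should be discarded (which also makes your third, extension step unnecessary), and \cite[Theorem 3.3]{ABEV} should be cited directly.
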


\begin{proof}
The conclusion of Lemma \ref{L31} makes it possible for us to directly apply \cite[Theorem 3.3]{ABEV}. The statement of this theorem together with a well-known fact that  Jordan epimorphisms  preserve unities \cite[Corollary 3, p.\,482]{JR} immediately gives the desired conclusion.
\end{proof}

\begin{corollary}
Assume the conditions of Theorem \ref{Gra1}. If $\A$ is a prime algebra, then $\phi$ is a scalar
multiple of either an epimorphism or an antiepimorphism.
\end{corollary}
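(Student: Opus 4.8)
The plan is to combine Theorem~\ref{Gra1} with the classical structure theory of Jordan homomorphisms onto prime rings. By Theorem~\ref{Gra1}, the standing hypotheses already yield $\phi = \alpha\theta$ for some $\alpha\in\C^*$ and some Jordan epimorphism $\theta\colon\A\to\A$. Since the scalar factor $\alpha$ plays no role in the ring-theoretic conclusion, it therefore suffices to prove that $\theta$ itself is either an epimorphism or an antiepimorphism.

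To this end I would first record that the target of $\theta$ is $\A$, which by the added hypothesis is prime and which, being a $C^*$-algebra, has characteristic $0$ (in particular different from $2$ and $3$). Thus $\theta$ is a Jordan epimorphism onto a prime algebra of characteristic $\ne 2,3$. At this point I would invoke Herstein's theorem on Jordan homomorphisms, which asserts precisely that a Jordan homomorphism of a ring onto a prime ring of characteristic $\ne 2,3$ is either a homomorphism or an antihomomorphism. Applied to the surjective map $\theta$, this gives that $\theta$ is an epimorphism or an antiepimorphism, whence $\phi=\alpha\theta$ is a scalar multiple of one of these, as claimed.

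The argument is essentially immediate once Theorem~\ref{Gra1} is in hand, so there is no genuine obstacle; the only point meriting attention is the verification that the hypotheses of the invoked theorem hold. This reduces to two routine checks: that the image of $\theta$, namely all of $\A$, is prime, which is exactly our added assumption, and that the characteristic restriction is satisfied, which is automatic in the $C^*$-algebra setting. One could alternatively appeal to the Jacobson--Rickart decomposition \cite{JR} of a Jordan homomorphism as a sum of a homomorphism and an antihomomorphism and then use primeness to rule out the mixed contribution, but invoking Herstein's result directly is the shortest route.
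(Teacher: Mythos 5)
Your proposal is correct and follows exactly the paper's own argument: apply Theorem~\ref{Gra1} to write $\phi=\alpha\theta$ with $\theta$ a Jordan epimorphism, then invoke Herstein's theorem \cite{H} to conclude that a Jordan epimorphism onto a prime algebra (of suitable characteristic, automatic here) is an epimorphism or an antiepimorphism. The extra checks you spell out (primeness of the target, characteristic $0$ of a $C^*$-algebra) are exactly the routine verifications the paper leaves implicit.
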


\begin{proof}
If $\A$ is prime, then
 epimorphisms or  antiepimorphisms are the only Jordan epimorphisms by Herstein's theorem  \cite{H}.
\end{proof}

\subsection{Polynomials satisfying \eqref{pd2}}
The treatment of \eqref{pd2} is similar to that of \eqref{pd1}.

\begin{lemma}\label{L32}
Let $f$ be a multilinear polynomial satisfying \eqref{pd2}. Suppose $\A$ is a unital algebra and   $\phi:\A\to \A$ is a linear map  preserving zeros of $f$ and satisfying
 $\phi(1)\in F^*1$. 
If $a,b\in A$ are such that $ab=ba=0$, then $\phi(a)\phi(b) = \phi(b)\phi(a)$.
  \end{lemma}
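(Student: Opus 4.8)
The plan is to imitate the proof of Lemma \ref{L31}, adapting it to condition \eqref{pd2}. As before, I would first normalize $\phi$ so that $\phi(1)=1$, replacing $\phi$ by $\lambda^{-1}\phi$ when $\phi(1)=\lambda 1$; this is harmless since the scalar multiple still preserves zeros of $f$ and the desired conclusion $\phi(a)\phi(b)=\phi(b)\phi(a)$ is unaffected by rescaling. The key difference is that now the first partial derivative $\frac{\partial^{d-1} f}{\partial x_2\cdots\partial x_d}$ vanishes, so substituting $1$ into the last $d-1$ slots yields nothing useful; instead \eqref{pd2} tells us that the \emph{second}-order quantity $\frac{\partial^{d-2} f}{\partial x_3\cdots\partial x_d}$ is a nonzero polynomial in $x_1,x_2$, which is where all the information now sits.

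Writing $f$ as in \eqref{ef}, I would set $1$ into the variables $x_3,\ldots,x_d$ and keep $x_1,x_2$ free. Because $\frac{\partial^{d-1}f}{\partial x_2\cdots\partial x_d}=0$ means $\sum_\sigma\lambda_\sigma=0$, the terms in which $x_1$ and $x_2$ appear in the same relative order must combine in a controlled way. Concretely, collecting monomials according to whether $\sigma^{-1}(1)<\sigma^{-1}(2)$ or $\sigma^{-1}(2)<\sigma^{-1}(1)$, the substitution $x_3=\cdots=x_d=1$ collapses $f(x_1,x_2,1,\ldots,1)$ into an expression of the form $\mu\,x_1x_2+\nu\,x_2x_1$ where $\mu=\sum_{\sigma^{-1}(1)<\sigma^{-1}(2)}\lambda_\sigma$ and $\nu=\sum_{\sigma^{-1}(2)<\sigma^{-1}(1)}\lambda_\sigma$. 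The vanishing of the first derivative gives $\mu+\nu=0$, while the nonvanishing of the second derivative in \eqref{pd2} should guarantee $\mu\ne 0$ (equivalently $\mu=-\nu\ne 0$). Thus $f(x_1,x_2,1,\ldots,1)=\mu\,(x_1x_2-x_2x_1)$, i.e. a nonzero scalar multiple of the commutator.

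From here the argument is immediate. Starting from $ab=ba=0$ I get $f(a,b,1,\ldots,1)=\mu(ab-ba)=0$, which applying $\phi$ yields $f(\phi(a),\phi(b),1,\ldots,1)=\mu(\phi(a)\phi(b)-\phi(b)\phi(a))=0$; since $\mu\ne 0$ this gives $\phi(a)\phi(b)=\phi(b)\phi(a)$, as desired. Note that in contrast to Lemma \ref{L31} I do \emph{not} need to symmetrize by adding $f(b,a,1,\ldots,1)$, because \eqref{pd2} already forces the reduced polynomial to be antisymmetric (a commutator) rather than symmetric (an anticommutator).

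The step I expect to require the most care is verifying that the substitution $x_3=\cdots=x_d=1$ really produces precisely $\mu(x_1x_2-x_2x_1)$ with $\mu\ne 0$, and in particular tying the nonvanishing of $\mu$ to the hypothesis $\frac{\partial^{d-2}f}{\partial x_3\cdots\partial x_d}\ne 0$. The subtlety is that a priori this second derivative is a polynomial in $x_1,x_2$ that could have several monomials; one must check that the derivative condition is exactly the assertion that $f(x_1,x_2,1,\ldots,1)$ is a nonzero combination of $x_1x_2$ and $x_2x_1$, and then combine it with $\mu+\nu=0$ to conclude the combination is the commutator. This bookkeeping over $S_d$, grouping permutations by the relative order of the preimages of $1$ and $2$, is routine but is the only place where condition \eqref{pd2} is genuinely used.
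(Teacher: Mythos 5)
Your proposal is correct and follows essentially the same route as the paper: normalize $\phi(1)=1$, substitute $1$ into the slots $x_3,\ldots,x_d$, and group permutations by whether $\sigma^{-1}(1)<\sigma^{-1}(2)$ or not, so that \eqref{pd2} reads $\mu+\nu=0$ and $\mu\ne 0$, forcing $f(x_1,x_2,1,\ldots,1)=\mu(x_1x_2-x_2x_1)$ and hence the conclusion. Your explicit identification of $\frac{\partial^{d-2}f}{\partial x_3\cdots\partial x_d}$ with $f(x_1,x_2,1,\ldots,1)$, and the remark that no symmetrization over $(a,b)$ and $(b,a)$ is needed here (unlike in Lemma \ref{L31}), are exactly the points the paper's proof uses, stated slightly more explicitly.
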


\begin{proof}
 We can reword \eqref{pd2} as
$$ \sum_{\sigma\in S_d}\lambda_{\sigma} = 0 \quad\mbox{and}\quad \mu:=\sum_{\sigma^{-1}(1) < \sigma^{-1}(2)}\lambda_\sigma \ne 0.$$
Therefore
$$\sum_{\sigma^{-1}(2) < \sigma^{-1}(1)}\lambda_\sigma =-\mu.$$
We may assume, for the same reason as in the proof of Lemma \ref{L31}, that $\phi(1)=1$. If $a,b\in \A$ are such that $ab=ba=0$, then
 $$f(a,b,1,\ldots,1) = 0,$$
  and hence $$f(\phi(a),\phi(b),1,\ldots,1) = 0.$$
    Since
$$ f(\phi(a),\phi(b),1,\ldots,1)
 = \sum_{\sigma^{-1}(1) < \sigma^{-1}(2)}\lambda_\sigma \phi(a)\phi(b) + \sum_{\sigma^{-1}(2) < \sigma^{-1}(1)}\lambda_\sigma \phi(b)\phi(a),$$
 it follows that
 $\mu\Bigl(\phi(a)\phi(b) - \phi(b)\phi(a) \Bigr)=0$, i.e., $\phi(a)$ and $\phi(b)$ commute. 
 \end{proof}

\begin{theorem} \label{Gra2}
Let $f$ be a multilinear polynomial of degree $d\ge 2$ satisfying \eqref{pd2}, and let
 $\A$ be a unital prime $C^*$-algebra that is not isomorphic to $M_2(\mathbb C)$.  If a continuous
bijective linear map $\phi:\A\to\A$ preserves zeros of $f$ and satisfies $\phi(1)\in \mathbb{C}^*\cdot 1$, then there exist  $\alpha\in \mathbb C$, an automorphism or an antiautomorphism $\theta$ of $\A$, and
a linear functional $f$ on $\A$ such that $\phi(a) = \alpha\theta(a) + f(a)1$
for all $a \in\A$.
\end{theorem}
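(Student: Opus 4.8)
The plan is to mirror the proof of Theorem~\ref{Gra1}, using Lemma~\ref{L32} in place of Lemma~\ref{L31}. Exactly as in that lemma we may assume $\phi(1)=1$: if $\phi(1)=\lambda 1$ with $\lambda\in\C^*$, then $\lambda^{-1}\phi$ still satisfies all the hypotheses and fixes $1$, and a solution of the asserted form for $\lambda^{-1}\phi$ yields one for $\phi$ after absorbing $\lambda$ into $\alpha$ and $f$. With this normalization, Lemma~\ref{L32} says that $ab=ba=0$ implies $[\phi(a),\phi(b)]=0$; that is, $\phi$ sends every pair of orthogonal elements to a commuting pair.

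First I would feed this into a structural description of continuous bijective linear maps on a unital prime $C^*$-algebra (not isomorphic to $M_2(\C)$) that turn orthogonal elements into commuting ones --- the commutator counterpart of \cite[Theorem~3.3]{ABEV} --- to obtain $\phi=\alpha\theta+\mu$, where $\alpha\in\C$, $\theta$ is a Jordan epimorphism of $\A$, and $\mu\colon\A\to\A$ is a linear map whose range is contained in the center of $\A$. The presence of a central summand $\mu$ here (absent in Theorem~\ref{Gra1}) reflects the fact that the commutator, unlike the anticommutator condition, is insensitive to central perturbations; the exclusion of $M_2(\C)$ is needed precisely at this structural step, as in the related commutativity-preserving and functional-identity results.

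Next I would make the three ingredients explicit. Since $\A$ is prime, its center is a unital commutative $C^*$-algebra with no zero divisors, hence equals $\C\cdot 1$; thus $\mu(a)=f(a)1$ for a linear functional $f$ on $\A$. By Herstein's theorem \cite{H}, the Jordan epimorphism $\theta$ of the prime algebra $\A$ is either an epimorphism or an antiepimorphism, and it preserves the unit, $\theta(1)=1$ \cite[Corollary~3, p.\,482]{JR}. Finally, $\theta$ is injective: if $\theta(c)=0$, then $\phi(c)=\mu(c)\in\C\cdot 1$, so $\phi(c)=\beta 1=\phi(\beta 1)$ for some $\beta\in\C$, and injectivity of $\phi$ gives $c=\beta 1$; but then $0=\theta(c)=\beta\theta(1)=\beta 1$ forces $\beta=0$ and $c=0$. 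Hence $\theta$ is an automorphism or an antiautomorphism, and $\phi(a)=\alpha\theta(a)+f(a)1$ has the required form.

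The main obstacle is the structural step of the second paragraph. In contrast with genuine commutativity preservers, the hypothesis controls $\phi$ only on orthogonal pairs, so the argument must show that in a prime $C^*$-algebra away from $M_2(\C)$ these pairs already carry enough information --- through the zero-Lie-product-determined property of $C^*$-algebras exploited in \cite{ABEV} --- to pin down the global form $\alpha\theta+\mu$. Establishing this reduction and locating exactly where the $M_2(\C)$ exception is unavoidable is the delicate part; the remaining identifications above are routine.
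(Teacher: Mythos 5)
Your proposal is correct and follows essentially the same route as the paper: normalize $\phi(1)=1$, apply Lemma~\ref{L32} to turn orthogonal pairs into commuting pairs, and then invoke a structural theorem from \cite{ABEV} about such maps on prime $C^*$-algebras. The ``delicate structural step'' you flag as the main obstacle is precisely \cite[Corollary 3.6]{ABEV}, which is what the paper cites, and which in fact already delivers the final form $\phi(a)=\alpha\theta(a)+f(a)1$ with $\theta$ an automorphism or antiautomorphism, so your supplementary Herstein/injectivity argument, though correct, is not needed.
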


\begin{proof}
 Lemma \ref{L32} makes it possible for us to apply \cite[Corollary 3.6]{ABEV}, which immediately gives the result.
\end{proof}

\subsection{Polynomials satisfying \eqref{pd3}} The condition \eqref{pd3} means that there exist $\lambda_1,\ldots,\lambda_d \in F$, not all zero, such that
$$
\sum_{i=1}^d \lambda_i = 0\quad\mbox{and}\quad f(x,x,\ldots,x,y) = \sum_{i=1}^d  \lambda_i x^{d-i}yx^{i-1}.
$$
The simplest case where $f=x_1x_2 - x_2x_1$ was considered in \cite[Theorem 2]{Br}. This result was one of the earliest applications of functional identities. Incidentally,  \cite[Theorem 2]{Br} was used  in the proof of  \cite[Corollary 3.6]{ABEV}, and therefore indirectly also in the proof of Theorem \ref{Gra2}. What we would now like to show is that using the advanced theory of functional identities one can  handle, in a more or less similar fashion,  a more general situation where $f$ satisfies \eqref{pd3}. 

Functional identities can be informally described as identical relations on rings that involve arbitrary (``unknown") functions. The goal is to describe these functions, or, when this is not possible, to determine the structure of the ring in question. 
For a full account on functional identities, as well as to some other notions that will appear below, we refer  to the book \cite{FI}.

\begin{theorem} \label{Gra3}
Let $f$ be a multilinear polynomial of degree $d\ge 2$ satisfying \eqref{pd3}, let {\rm char}$(F)\ne 2,3$, and let
 $\A$ be a centrally closed  prime $F$-algebra with  $\dim_F \A> d^2$.  If a
bijective linear map $\phi:\A\to\A$ preserves zeros of $f$, then there exist  $\alpha\in F$, an automorphism or an antiautomorphism $\theta$ of $\A$, and
a linear functional $f$ on $\A$ such that $\phi(a) = \alpha\theta(a) + f(a)1$
for all $a \in\A$.
\end{theorem}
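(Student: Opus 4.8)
The condition \eqref{pd3} tells us that
$$
f(x,x,\ldots,x,y) = \sum_{i=1}^d \lambda_i x^{d-i} y x^{i-1}
$$
with $\sum_i \lambda_i = 0$ and not all $\lambda_i$ zero. My plan is to exploit this specialization to manufacture a functional identity out of the zero-preserving hypothesis, and then invoke the structure theory for such identities on centrally closed prime algebras. The guiding analogy is the case $f = x_1 x_2 - x_2 x_1$ treated in \cite[Theorem 2]{Br}: there one substitutes $x_1 = x_2 = a$ to obtain a trivial zero $f(a,a) = 0$, which forces $f(\phi(a),\phi(a))=0$, i.e.\ $\phi(a)$ and $\phi(a)^2$ commute after a suitable reading; here the substitution $x_1 = \cdots = x_{d-1} = a$, $x_d = b$ should play the same role.

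First I would set $g(x,y) := f(x,\ldots,x,y) = \sum_{i=1}^d \lambda_i x^{d-i} y x^{i-1}$ and record two elementary facts. Since $\sum_i \lambda_i = 0$, we have $g(x,x) = \bigl(\sum_i \lambda_i\bigr) x^d = 0$ for every $x$, so $(a,\ldots,a,a)\in S$ trivially and the preserving hypothesis yields no information along the diagonal; the useful specializations are off-diagonal. The key observation is that $g(x,y) = 0$ holds whenever $y$ is chosen so that the $\lambda_i x^{d-i} y x^{i-1}$ cancel, and more importantly that for each fixed $a$ the map $y \mapsto g(a,y)$ is an explicit $F$-linear combination of left and right multiplications by powers of $a$. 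My intention is to find, for generic $a$, enough elements $b$ with $g(a,b)=0$ so that applying $\phi$ and reading off $g(\phi(a),\phi(b))=0$ produces a relation that, after letting $b$ range, becomes a functional identity in the single unknown function $\phi$ evaluated at various arguments. Concretely, I would aim to derive an identity of the shape
$$
\sum_{i=1}^d \lambda_i \, \phi(a)^{d-i}\, E(a) \,\phi(a)^{i-1} = 0
$$
for a suitable $\phi$-dependent expression $E(a)$, valid for all $a$ in a Zariski-dense set, and then linearize in $a$.

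Next I would feed the resulting relation into the machinery of functional identities as developed in \cite{FI}. The hypotheses are tailored for this: $\A$ is centrally closed prime, $\mathrm{char}(F)\ne 2,3$ rules out the low-characteristic degeneracies, and the dimension bound $\dim_F \A > d^2$ is exactly the kind of ``$d$-freeness'' threshold that guarantees the functional identity has only standard solutions, i.e.\ that the unknown functions are of the expected elementary form. The conclusion of such a theorem will be that $\phi$, up to the additive center-valued term $f(a)1$ and the scalar $\alpha$, is multiplicative or antimultiplicative on products, which is precisely the statement that $\alpha^{-1}(\phi - f(\cdot)1)$ is an automorphism or antiautomorphism $\theta$. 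Bijectivity of $\phi$ is needed both to run the symmetric argument (so that $\phi^{-1}$ also preserves zeros, giving the reverse inclusion) and to upgrade a Jordan-type homomorphism to an honest (anti)homomorphism; Herstein's theorem, already invoked in the corollary to Theorem \ref{Gra1}, is the natural tool for the prime case.

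The main obstacle, and the step where the real work lies, is the \emph{derivation of the functional identity itself}: turning the combinatorial cancellation condition ``$g(a,b)=0$'' into a clean relation to which the $d$-freeness results apply. The difficulty is that the zeros of $g(a,\cdot)$ for fixed $a$ do not form a subspace one can substitute freely into a bilinear preserver argument; one must instead choose a family of pairs $(a,b)$ with $g(a,b)=0$ that is rich enough to force the full identity, while controlling the interaction between the powers $a^{d-i}$ and the unknown $\phi(b)$. I expect this to require a careful selection of test elements (for instance orthogonal-idempotent or nilpotent configurations, as in Lemma \ref{el}) together with a linearization that replaces the single variable $a$ by $d-1$ independent variables, thereby matching the degree threshold $d^2$. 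Once the identity is in standard form, the appeal to \cite{FI} is, as the authors put it, ``more or less similar'' to the $f = x_1x_2 - x_2x_1$ case and should be routine.
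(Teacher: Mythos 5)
There is a genuine gap, and it sits exactly at the step you yourself flag as ``the main obstacle'': you never actually produce the functional identity, and the search you propose (rich families of pairs $(a,b)$ with $g(a,b)=0$, idempotent or nilpotent test configurations, generic $a$) is looking in the wrong place. The paper's proof turns on one simple observation that your proposal misses: the substitution $y=x^2$ already gives an identical zero. Indeed, since $x^{d-i}\cdot x^2\cdot x^{i-1}=x^{d+1}$ for every $i$, condition \eqref{pd3} gives
\begin{equation*}
f(x,x,\ldots,x,x^2)=\sum_{i=1}^d \lambda_i\, x^{d-i}x^2x^{i-1}=\Bigl(\sum_{i=1}^d\lambda_i\Bigr)x^{d+1}=0 .
\end{equation*}
So $(a,\ldots,a,a^2)$ is a zero of $f$ for \emph{every} $a\in\A$, and the preserving hypothesis immediately yields
\begin{equation*}
\sum_{i=1}^d \lambda_i\, \phi(a)^{d-i}\phi(a^2)\phi(a)^{i-1}=0 \qquad\text{for all } a\in\A .
\end{equation*}
This is precisely the identity of the shape you wanted, with $E(a)=\phi(a^2)$; no selection of test elements, no genericity, and no analysis of the zero set of $g(a,\cdot)$ is needed. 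Your remark that ``the useful specializations are off-diagonal'' is right, but the useful off-diagonal point is the completely explicit one $b=a^2$ (any polynomial in $a$ would do), not a cleverly constructed family. Without this observation your argument is a plan rather than a proof: the reduction to the functional-identity machinery is exactly the content of the theorem, and it is the part you leave open.

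A secondary discrepancy is in the endgame. After complete linearization, the paper applies \cite[Theorem 4.13]{FI} (with the $d$-freeness hypotheses supplied by \cite[Theorems 5.11 and C.2]{FI}, surjectivity of $\phi$, and $\dim_F\A>d^2$) to conclude that $\phi(ab+ba)$ is a quasi-polynomial, i.e., $\phi(a^2)=\lambda\phi(a)^2+\mu(a)\phi(a)+\nu(a)$ with $\mu$ linear and $\nu$ scalar-valued (using char$(F)\ne 2$), and then finishes with \cite[Theorem 2]{Br} on commuting traces, which uses injectivity of $\phi$ and char$(F)\ne 3$. You instead invoke Herstein's theorem to upgrade a Jordan-type map; that is the tool for Theorem \ref{Gra1}'s corollary, but here the FI machinery does not hand you a Jordan epimorphism (the scalar $\lambda$ and the terms $\mu,\nu$ are in the way), so \cite[Theorem 2]{Br} — which you correctly cite as the guiding analogy at the outset — is the right closing step, not Herstein.
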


\begin{proof}
As  $f(x,x,\ldots,x,x^2)$ is obviously $0$ if $f$ satisfies \eqref{pd3}, we have
$$ f(\phi(a),\phi(a),\ldots,\phi(a),\phi(a^2)) =0$$
for all $a\in\A$, i.e.,
$$\sum_{i=1}^d  \lambda_i \phi(a)^{d-i}\phi(a^2)\phi(a)^{i-1}=0.$$
A complete linearization of this identity leads to a situation where \cite[Theorem 4.13]{FI} is applicable under suitable assumptions on $\A$ and $\phi$.
In view of \cite[Theorems 5.11 and C.2]{FI}, these assumptions are fulfilled in our case since $\phi$ is surjective and $\dim_F \A> d^2$.
The conclusion is that
$\phi(ab +ba)$ is a quasi-polynomial. As char$(F)\ne 2$, this is equivalent to the existence of 
$\lambda\in F$ and maps $\mu,\nu:A\to F$ (with $\mu$ linear) such that
$$
\phi(a^2) = \lambda\phi(a)^2 + \mu(a)\phi(a) + \nu(a)
$$
for every $a\in \A$. Since $\phi$ is also injective and char$(F)\ne 3$, the result now follows from \cite[Theorem 2]{Br}.
\end{proof}

It is worth pointing out that all prime $C^*$-algebras are centrally closed \cite[Proposition 2.2.10]{AM}.
Let us also mention that infinite dimensional algebras are not exluded in Theorem \ref{Gra3};
 only algebras of ``small" dimension $\le d^2$ are. 

\bigskip

{\bf Acknowledgement.} The authors are  grateful to the referee for a careful reading of the
paper and comments which have improved the readability of the paper.


\begin{thebibliography}{99}


\bibitem{ABEV0}  J. Alaminos, M. Bre\v sar, J. Extremera, A. R. Villena,  Maps preserving zero products,  {\em Studia Math.} {\bf 193} (2009),  131-159.


\bibitem{ABEV} J. Alaminos, M. Bre\v sar, J. Extremera, A. R. Villena, Characterizing Jordan
maps on $C^*$-algebras through  zero products, {\em Proc.
Edinb. Math. Soc.} {\bf 53} (2010), 543-555.

\bibitem{AM}
P. Ara, M. Mathieu,
{\em Local multipliers of $C^*$-algebras,}
 Springer-Verlag,  2003.

\bibitem{BF} K.\,I. Beidar, Y. Fong,  On additive isomorphisms of prime rings
preserving polynomials, {\em J. Algebra} {\bf 217} (1999), 650-667.

\bibitem{Bor}A. Borel, {\em Linear algebraic groups}, Springer-Verlag, 1991.

\bibitem{Br}M. Bre\v sar,  Commuting traces of biadditive mappings,
commutativity--preserving mappings and Lie mappings,  {\em Trans.
Amer. Math. Soc.} {\bf 335} (1993), 525-546.


\bibitem{FI}  M. Bre\v{s}ar,  M.\,A. Chebotar,   W.\,S. Martindale 3rd,  {\em Functional identities},
Birkh\" auser Verlag,  2007.


\bibitem{BrSe}M. Bre\v{s}ar, P. \v Semrl, On bilinear maps on matrices with applications to commutativity preservers,  {\em J. Algebra} {\bf 301} (2006), 803-837.

\bibitem{CFL}M.\,A. Chebotar, Y. Fong, P.-H. Lee, On maps preserving zeros of the polynomial $xy-yx^*$, {\em Linear Algebra Appl.} {\bf 408} (2005), 230-243.

\bibitem{DD}T.\,D. Dinh, M. Donzella, On maps preserving zeros of Lie polynomials of small degrees,
{\em Linear Algebra Appl.} {\bf 432} (2010),  493-498.

\bibitem{Dix}J. Dixon, Rigid embedding of simple groups in the general linear group, {\em Can. J. Math.} {\bf 29} (1977), 384-391.


\bibitem{Ger}T. Gerritzen, Taylor expansion of noncommutative polynomials, {\em Arch. Math.} {\bf 71} (1998), 279-290.

\bibitem{GK}A. E. Guterman, B. Kuzma, Preserving zeros of a polynomial, {\em Comm. Algebra} {\bf 37} (2009), 4038-4064.


\bibitem{H}I.\,N. Herstein, Jordan homomorphisms, {\em Trans. Amer.
Math. Soc.} {\bf 81} (1956), 331-341.


%\bibitem{H}I. N. Herstein, Topics in Ring Theory, The University of Chicago Press, Chicago, 1969.


%\bibitem{H}J. E. Humphreys, {\em Linear Algebraic Groups}, Springer-Verlag, 1975.


\bibitem{JR}N. Jacobson, C.\,E. Rickart, Jordan homomorphisms of rings, {\em Trans. Amer.
Math. Soc.} {\bf 69} (1950), 479-502.


\bibitem{PD}
V.\,P. Platonov, D. \v Z.  \DJ okovi\' c,
Linear preserver problems and algebraic groups,
{\em Math. Ann.} {\bf 303} (1995),  165-184.

\end{thebibliography}
\end{document}